\documentclass[11pt,leqno,a4paper, english]{smfart}


\usepackage[]{hyperref}
\hypersetup{
    colorlinks=true,       
    linkcolor=red,          
    citecolor=blue,        
    filecolor=magenta,      
    urlcolor=cyan           
}

\usepackage{amsmath}
\usepackage{amsfonts,amssymb}
\usepackage{enumerate}
\usepackage{mathrsfs}
\usepackage{amsthm}
\setcounter{tocdepth}{2}
\usepackage{a4wide}

\theoremstyle{plain}
\newtheorem{theorem}{Theorem }[section]
\newtheorem{prop}[theorem]{Proposition}
\newtheorem{lemm}[theorem]{Lemma}

\theoremstyle{definition}
\newtheorem{rema}[theorem]{Remark}


\DeclareSymbolFont{pletters}{OT1}{cmr}{m}{sl}
\DeclareMathSymbol{s}{\mathalpha}{pletters}{`s}


\def\mez{\frac{1}{2}}

\def\xR{\mathbf{R}}

\textwidth=15cm
\textheight=22cm

\numberwithin{equation}{section}

\pagestyle{plain}

\title{Laplace eigenfunctions  and  damped wave equation~II: product manifolds. }
\author{N.Burq}
\address{Laboratoire de Math\'ematiques UMR 8628 du CNRS. Universit\'e Paris-Sud, B\^atiment 425, 91405 Orsay Cedex}
 \author{C.Zuily }
 \address{Laboratoire de Math\'ematiques UMR 8628 du CNRS. Universit\'e Paris-Sud, B\^atiment 425, 91405 Orsay Cedex}
 \thanks{N.B. was supported in part by Agence Nationale de la Recherche
  project NOSEVOL, 2011 BS01019 01. N.B. and C. Z. were supported in part by Agence Nationale de la Recherche
  project  ANA\'E ANR-13-BS01-0010-03.}
\begin{abstract}
The purpose of this article is to study possible concentrations of eigenfunctions of Laplace operators (or more generally quasi-modes) on product manifolds. We show that the approach of the first author and Zworski~\cite{BuZw03, BuZw03-1} applies (modulo rescalling) and  deduce new stabilization results for weakly damped wave equations which extend to product manifolds previous results by Leautaud-Lerner~\cite{LeLe14} obtained for products of tori.
\end{abstract}
\begin{altabstract}
Dans cet article, on \'etudie les concentrations possibles des fonctions propres du Laplacien  (ou plus g\'en\'eralement de quasi-modes) sur des vari\'et\'es produit. On d\'emontre que l'approche du premier auteur avec M. Zworski~\cite{BuZw03, BuZw03-1} s'applique (modulo un changement d'\'echelle) et on en d\'eduit de nouveaux r\'esultats de stabilization pour l'\'equation des ondes faiblement amortie qui g\'en\'eralisent au cas des vari\'et\'es produits des r\'esultats ant\'erieurs de Leautaud-Lerner~\cite{LeLe14} obtenus dans le cas de produits de tores.
\end{altabstract}
\date{\empty}
\begin{document}
 \maketitle
 \section{Notations and main results}
   In this work we continue our investigation \cite{BuZu15} of concentration properties of eigenfunctions (or more generally quasimodes) of the Laplace-Beltrami  operator  on submanifolds and we study here the very particular setting of product manifolds.
   
        Let  $(M_j, g_j), j=1,2$ be two compact manifolds. We denote by  $(M= M_1\times M_2,g= g_1\otimes g_2)$ the product, and  by $d_j$ (resp. $d$) the geodesic distance  in $M_j$ (resp. $M$). Let $q_0 \in M_2$ and
    $$ \Sigma = M_1 \times \{q_0\}.$$
  For $\beta>0$ we introduce
  \begin{equation}\label{Nbeta}
  N_\beta = \{m=(p,q) \in M: d(m, \Sigma) <\beta\} = M_1 \times \{q\in M_2: d_2(q,q_0)<\beta\}.
\end{equation}
   Our first result is the following.
         
     \begin{theorem}\label{quasi-mode}
       For any $\delta>0$, there exists $C>0, h_0>0$ such that for every $0<h\leq h_0$ and every  solution $\psi  \in H^2(M)$ of the equation  
$$(h^2 \Delta_g   +1)\psi  = F$$
we have the estimate
\begin{equation}\label{qm-N}
\Vert \psi  \Vert_{L^2(N_{h^{\delta}})} \leq C\big( \Vert \psi  \Vert_{L^2(N_{2h^{\delta} }  \setminus N_{h^{\delta}})} + h^{2\delta -2} \Vert F \Vert_{L^2(N_{2h^{\delta} } )}\big).
 \end{equation} \end{theorem}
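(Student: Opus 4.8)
The plan is to exploit the product structure to reduce the statement to a uniform family of local estimates on the single factor $M_2$, and then to rescale the shrinking tube $N_{h^\delta}$ to a fixed ball so that semiclassical estimates of Burq--Zworski type apply.

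\medskip\noindent\emph{Reduction to $M_2$.} Let $(\phi_k)_k$ be an $L^2(M_1)$-orthonormal basis of eigenfunctions, $\Delta_{g_1}\phi_k=-\lambda_k\phi_k$ with $\lambda_k\geq0$. Writing $\psi=\sum_ku_k(q)\phi_k(p)$, $F=\sum_kF_k(q)\phi_k(p)$ and using $\Delta_g=\Delta_{g_1}\otimes\mathrm{Id}+\mathrm{Id}\otimes\Delta_{g_2}$, the equation decouples into
$$(h^2\Delta_{g_2}+\mu_k)u_k=F_k\ \text{ on }M_2,\qquad \mu_k:=1-h^2\lambda_k\in(-\infty,1].$$
Since $N_\beta=M_1\times B_2(q_0,\beta)$ (with $B_2$ the geodesic ball of $M_2$), Parseval in $p$ reduces the claim to proving, with $C$ uniform in $k$ and $h$,
$$\|u_k\|_{L^2(B_2(q_0,h^\delta))}\leq C\Big(\|u_k\|_{L^2(B_2(q_0,2h^\delta)\setminus B_2(q_0,h^\delta))}+h^{2\delta-2}\|F_k\|_{L^2(B_2(q_0,2h^\delta))}\Big),$$
and then summing the squares. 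Crucially this is a purely local statement on the small ball $B_2(q_0,2h^\delta)$: nothing about the global geometry or geodesic flow of $M_2$ is used.

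\medskip\noindent\emph{Rescaling.} Work in geodesic normal coordinates $x$ of $M_2$ at $q_0$, so $B_2(q_0,\beta)=\{|x|<\beta\}$ and the coefficients of $\Delta_{g_2}$ converge (with all derivatives) to those of the flat Laplacian as $|x|\to0$. Fix $C_0\geq2$. If $\mu_k\leq C_0\tilde h^2$ with $\tilde h:=h^{1-\delta}$, set $y=x/h^\delta$: the equation becomes $(\tilde h^2\widetilde\Delta_y+\mu_k)v=G$ on $\{|y|<2\}$, with $\widetilde\Delta_y$ uniformly elliptic and tending to $\Delta_y$; dividing by $\tilde h^2$ and noting that the common Jacobian cancels and $h^{2\delta-2}=\tilde h^{-2}$, the estimate reduces to a \emph{scale-free three-sphere inequality}
$$\|v\|_{L^2(|y|<1)}\leq C\big(\|v\|_{L^2(1<|y|<2)}+\|(\widetilde\Delta_y+c)v\|_{L^2(|y|<2)}\big),\qquad c=\mu_k/\tilde h^2\leq C_0.$$
If instead $\mu_k\geq\tilde h^2$, set $z=(\sqrt{\mu_k}/h)\,x$: the equation becomes the Helmholtz equation $(\widehat\Delta_z+1)w=\widehat G$ on $\{|z|<2\sigma\}$ with $\sigma:=\sqrt{\mu_k}/\tilde h\geq1$, where $\widehat\Delta_z$ again tends to $\Delta_z$ uniformly on $\{|z|\leq2\sigma\}$ (the base point moves a distance $\leq2h^\delta\to0$ and the subprincipal part carries a factor $h/\sqrt{\mu_k}\leq h^\delta\to0$); since $\mu_k^{-1}\leq h^{2\delta-2}$ and the common Jacobian cancels, the estimate reduces to the uniform-in-$\sigma$ bound
$$\|w\|_{L^2(|z|<\sigma)}\leq C\big(\|w\|_{L^2(\sigma<|z|<2\sigma)}+\|(\widehat\Delta_z+1)w\|_{L^2(|z|<2\sigma)}\big).$$

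\medskip\noindent\emph{The model estimates.} The three-sphere inequality for $\widetilde\Delta_y+c$ is classical: for $c$ in a fixed compact set it follows from a normal-family argument (interior elliptic estimates give an $H^1_{\mathrm{loc}}$-convergent subsequence; the limit solves $(\Delta_y+c_\infty)v=0$, vanishes on $1<|y|<2$, hence on $|y|<2$ by unique continuation, contradicting a normalisation $\|v\|_{L^2(|y|<1)}=1$), while for $c$ very negative the operator is coercive and nested cut-offs give the bound directly; the two sub-cases overlap and cover all $c\leq C_0$. The Helmholtz inequality, uniform in the radius $2\sigma$, says that a solution of $(\widehat\Delta_z+1)w=\widehat G$ cannot concentrate more mass in a ball than in the surrounding shell of comparable width, with a constant independent of the frequency (equivalently of $\sigma$); this is exactly of the type established in \cite{BuZw03,BuZw03-1}, and is stable under the uniform perturbation $\widehat\Delta_z\to\Delta_z$. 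The two regimes cover $\mu_k\in(-\infty,1]$, so the bound of the reduction step holds with a uniform $C$, and summing the squares over $k$ finishes the proof.

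\medskip The main difficulty is the uniformity in the Helmholtz regime: one needs the Burq--Zworski high-frequency control to hold along the \emph{entire} spectral family generated by $M_1$, uniformly in the energy $\mu_k$ and across the transition between the elliptic and oscillatory behaviours. This, together with the two successive rescalings $\tilde h=h^{1-\delta}$ and $\sigma=\sqrt{\mu_k}/\tilde h$, is what is meant by ``the approach of \cite{BuZw03,BuZw03-1} applies, modulo rescaling''.
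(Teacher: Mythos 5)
Your overall architecture is the same as the paper's: separate variables on $M_1$, reduce to a one--parameter family of equations $(-\Delta_{g_2}-\tau)u_k=h^{-2}F_k$ on $M_2$, rescale the $h^\delta$--ball to a fixed ball so the metric becomes a small perturbation of the flat one, and run a trichotomy (very negative, bounded, and large spectral parameter), the last regime being treated by semiclassical control/defect--measure arguments at the scale $\sigma^{-1}=\widetilde{h}/\sqrt{\mu_k}=(h^{2\delta}\tau)^{-1/2}$. Your second rescaling to a Helmholtz equation on a ball of radius $2\sigma$ is just the paper's hyperbolic case written in different variables, and your bounded/coercive cases match the paper's low--frequency and elliptic cases.

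There is, however, one concrete error: the model estimate you invoke in the hyperbolic regime,
\begin{equation*}
\|w\|_{L^2(|z|<\sigma)}\leq C\bigl(\|w\|_{L^2(\sigma<|z|<2\sigma)}+\|(\widehat\Delta_z+1)w\|_{L^2(|z|<2\sigma)}\bigr)\quad\text{with $C$ independent of $\sigma$,}
\end{equation*}
is false. Take $w(z)=\chi(z/\sigma)e^{iz_1}$ with $\chi\in C^\infty_0(|y|<1/2)$, $\chi=1$ on $|y|<1/4$: then the annulus term vanishes, $\|(\Delta_z+1)w\|_{L^2}\leq C\sigma^{-1}\sigma^{k/2}$ while $\|w\|_{L^2(|z|<\sigma)}\sim\sigma^{k/2}$, so the inequality fails by a factor $\sigma$. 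This is the usual $O(\hslash^{-1})$ loss in non--trapping resolvent estimates (in semiclassical variables $y=z/\sigma$, $\hslash=\sigma^{-1}$, the correct bound carries $\hslash^{-1}\|(\hslash^2\Delta+1)w\|$); it is not ``exactly of the type established in~\cite{BuZw03,BuZw03-1}'' without that loss. The error is repairable: the true estimate gives a coefficient $\sigma\mu_k^{-1}=(\widetilde{h}\sqrt{\mu_k})^{-1}$ on $\|F_k\|$, and precisely because you are in the case $\mu_k\geq\widetilde{h}^2$ this is $\leq\widetilde{h}^{-2}=h^{2\delta-2}$, so the theorem still follows. This is exactly how the paper normalizes things: its Proposition~\ref{propag} keeps the factor $\frac{1}{1+|\tau|^{1/2}}$ on the source term, which encodes the $\sigma^{-1}$ gain you omitted. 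Relatedly, ``stable under the uniform perturbation $\widehat\Delta_z\to\Delta_z$'' deserves care: because of the same $\sigma$ loss, the first--order error terms cannot simply be absorbed as a source (that would cost $\sigma h^{\delta}\sim h^{2\delta-1}\gg1$ for $\delta<1/2$); they must be handled inside the commutator/defect--measure argument, as the paper does via its estimates on the terms $(2)$ and $(3)$ in~\eqref{hyper} (and, for merely Lipschitz metrics, Lemma~\ref{lem.commut}).
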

 
 As an application of Theorem~\ref{quasi-mode}, we consider  weakly damped wave equations on a compact Riemaniann manifold $(\mathcal{M},g),$ 
  \begin{equation}\label{damped}
   (\partial_t^2 - \Delta_g+ b(m) \partial_t )u =0,\quad  (u, \partial_tu ) \arrowvert_{t=0} = ( u_0, u_1)\in H^{1+k}(\mathcal{M}) \times H^k( \mathcal{M} ), 
   \end{equation}
where 
$0\leq b \in L^\infty (\mathcal{M})$, for which the energy
$$ E(u)( t) = \int_{\mathcal{M}} \big(g_p(\nabla_g u(t,m), \nabla_g u(t,m)) + | \partial_t u(t,m)|^2 \big)dv_g(m)
$$
is decaying since  $\frac d{dt} E(u)(t) = - \int_\mathcal{M} b(m)|\partial_tu(t,m)|^2 dv_g(m) \leq 0.$ 
Let 
$$\omega= \cup\{ U \text{ open }:  \mathrm{ess} \inf_{U} b >0\}
$$  be the domain where effective damping occurs. We denote by 
$$\mathcal{GC}=\{ \rho\in S^{\star}\mathcal{M} : \exists s\in \mathbb{R}; \Phi(s) \rho =(m_1, \xi_1)  \in S^{\star}\omega\},$$
the (open) set of geometrically controlled points (here $\Phi(s)$ is the bicharacteristic flow). Let 
\begin{equation}\label{T}
 \mathcal{T}= S^*\mathcal{M} \setminus \mathcal{GC},\quad T= \Pi_x \mathcal{T}
\end{equation}
where $\mathcal{T}$ is the trapped set  and  $\Pi_x$ the projection on the base manifold $\mathcal{M}$. 

Our second result is the following.
 \begin{theorem}\label{stabi}
Assume that
\begin{enumerate} 

 \item there exists a neighborhood $V$ of $T$ in $\mathcal{M},$  a compact   Lipschitz Riemannian manifold $(M_1, g_1)$   of dimension $k $ and a Lipschitz  isometry
 $$\Theta: V \to (M_1 \times B(0,1), \widetilde{g}= g_1\otimes g_2)$$
   
   where $B(0,1)$ is the unit ball in $\mathbb{R}^{d-k}$ endowed with the (Lipschitz) metric $g_2$,
        \item  there exists $\gamma>0,c,C>0$ such that
\begin{equation}\label{below}
      c |z|^{2 \gamma} \leq b(\Theta^{-1}(p,z)) \leq C |z|^{2 \gamma},\quad  \forall (p,z)\in M_1 \times B(0,1).
  \end{equation}
  \end{enumerate}
Then there exists $C>0$ such that for any $(u_0, u_1) \in H^{2} (\mathcal{M}) \times H^1(\mathcal{M})$, the solution $u$ to~\eqref{damped} satisfies 
$$
E(u)^{1/2} (t) \leq \frac {C} { t^{ 1+ \frac 1 \gamma }} \bigl( \|  u_0\|_{H^ 2(\mathcal{M})} + \| u_1 \|_{H^1(\mathcal{M})}\bigr).
$$ 
\end{theorem}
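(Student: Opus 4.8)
The plan is to reduce the claimed energy decay to a high–frequency resolvent estimate on the imaginary axis, and then to prove that estimate by combining, away from the trapped set, classical propagation estimates with, near it, the quasimode estimate of Theorem~\ref{quasi-mode}.

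Write \eqref{damped} as $\partial_t U=\mathcal A U$, $U=(u,\partial_t u)$, with $\mathcal A(v_0,v_1)=(v_1,\Delta_g v_0-bv_1)$ acting on $\mathcal H=H^1(\mathcal M)\times L^2(\mathcal M)$ (working modulo constants, so that the energy is a norm and $i\mathbb R\subset\rho(\mathcal A)$, the absence of a nonzero purely imaginary eigenvalue following from unique continuation since $\omega\ne\emptyset$). Since $D(\mathcal A)=H^2\times H^1$ and the semigroup is bounded, the Borichev--Tomilov theorem reduces the desired decay to
$$\|(i\lambda-\mathcal A)^{-1}\|_{\mathcal L(\mathcal H)}\le C\,\langle\lambda\rangle^{\gamma/(\gamma+1)},$$
equivalently to $\|(u_0,u_1)\|_{\mathcal H}\le C\langle\lambda\rangle^{\gamma/(\gamma+1)}\|(i\lambda-\mathcal A)(u_0,u_1)\|_{\mathcal H}$ for $(u_0,u_1)\in D(\mathcal A)$. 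Put $(f_1,f_2)=(i\lambda-\mathcal A)(u_0,u_1)$, $E=\|(f_1,f_2)\|_{\mathcal H}$ and $h=\langle\lambda\rangle^{-1}$. Eliminating $u_1=i\lambda u_0-f_1$ gives the Helmholtz‑type equation $(-\Delta_g-\lambda^2+i\lambda b)u_0=f_2+i\lambda f_1+bf_1$, i.e., after multiplication by $h^2$,
$$(h^2\Delta_g+1-ihb)\,u_0=\tilde F,$$
where $\tilde F$ is a source that, in a suitable semiclassical norm, has size $O(hE)$. Two elementary facts are used repeatedly: pairing the $u_1$‑equation with $u_1$ and using $u_1=i\lambda u_0-f_1$ gives the damping identity $\|\sqrt b\,u_1\|_{L^2}^2\lesssim E\,\|(u_0,u_1)\|_{\mathcal H}$, hence $\|\sqrt b\,u_0\|_{L^2}^2\lesssim h^2E\,\|(u_0,u_1)\|_{\mathcal H}$; and $u_1\simeq i\lambda u_0$ yields $\|(u_0,u_1)\|_{\mathcal H}\lesssim h^{-1}\|u_0\|_{L^2}+E$.

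Next we localise $u_0$ according to the distance to $\Sigma$ (identified, near $T$, with a subset of $M_1\times B(0,1)$ via $\Theta$). Fix $V'\Subset V$ a neighbourhood of $T$; since $\mathcal T\subset S^*T\subset S^*V'$, the set $S^*(\mathcal M\setminus V')$ is a compact subset of the open set $\mathcal{GC}$, so the geometric control condition holds outside $V'$ with uniform control time. The ``black box'' propagation scheme of \cite{BuZw03,BuZw03-1}, applied to $h^2\Delta_g+1$ on $\mathcal M$ (with the Lipschitz metric on $V$, modulo the rescaling already present in Theorem~\ref{quasi-mode}), then gives, for any $\delta>0$, an estimate of the form
$$\|u_0\|_{L^2(\mathcal M)}\lesssim \|u_0\|_{L^2(N_{h^\delta})}+h^{-\gamma\delta}\|\sqrt b\,u_0\|_{L^2}+h^{-1}\|\tilde F\|,$$
the factor $h^{-\gamma\delta}$ arising from dominating $\|u_0\|$ on $\{d(\cdot,\Sigma)\gtrsim h^\delta\}\cap V$ by $\|\sqrt b\,u_0\|$ through the lower bound $b\gtrsim h^{2\gamma\delta}$ valid there, see \eqref{below}. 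To absorb $\|u_0\|_{L^2(N_{h^\delta})}$, observe that on $N_{2h^\delta}$ the perturbation $-ihb$ of $h^2\Delta_g+1$ has size $O(h^{1+2\gamma\delta})$ by \eqref{below}; applying Theorem~\ref{quasi-mode} with $F=ihb\,u_0+\tilde F$ and with $\delta=\frac1{2(\gamma+1)}$ — the critical value at which $h^{2\delta-2}\cdot h^{1+2\gamma\delta}=h^{2\delta(\gamma+1)-1}$ is $O(1)$ — one absorbs the $\|u_0\|_{L^2(N_{2h^\delta})}$ term and is left with $\|u_0\|_{L^2(N_{h^\delta})}$ controlled by the collar mass $\|u_0\|_{L^2(N_{2h^\delta}\setminus N_{h^\delta})}\lesssim h^{-\gamma\delta}\|\sqrt b\,u_0\|_{L^2}$ plus source terms. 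Inserting this into the previous display, combining with the damping identity and with $\|(u_0,u_1)\|_{\mathcal H}\lesssim h^{-1}\|u_0\|_{L^2}+E$, and closing by a Young inequality and an absorption, one reaches $\|(u_0,u_1)\|_{\mathcal H}\lesssim h^{-2\gamma\delta}E=\langle\lambda\rangle^{\gamma/(\gamma+1)}E$, which is the sought bound.

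The main obstacle is the quantitative bookkeeping required to reach exactly the exponent $\gamma/(\gamma+1)$. The delicate term is $\tilde F$, which contains the contribution $-ihf_1$ with $f_1$ only in $H^1$: the crude estimate $\|\tilde F\|_{L^2}\lesssim hE$, once hit by the factor $h^{2\delta-2}$ of Theorem~\ref{quasi-mode}, would cost a spurious power of $\lambda$. One must instead measure the source in a semiclassical negative‑order norm, or perform a preliminary change of unknown $u_0\mapsto u_0-i\lambda^{-1}f_1$ (trading the factor $\lambda$ for a derivative falling on $f_1$, which is admissible since $\|\lambda^{-1}\Delta_g f_1\|_{H^{-1}}\lesssim hE$), and also handle with care the absorption at the borderline value $\delta=\frac1{2(\gamma+1)}$. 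A secondary point is to check that Theorem~\ref{quasi-mode} and the propagation estimates are stable under the $O(h^{1+2\gamma\delta})$ perturbation $-ihb$ of the operator and remain valid with the merely Lipschitz metric $\widetilde g$ on $V$; both follow from the rescaling argument underlying Theorem~\ref{quasi-mode}.
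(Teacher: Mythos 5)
Your overall strategy is the paper's: reduce via Borichev--Tomilov to a resolvent bound for $-h^2\Delta_g-1+ihb$ with loss $h^{-1-\gamma/(1+\gamma)}$ (Proposition~\ref{corsimple}), use the damping identity coming from the imaginary part of the energy identity, apply Theorem~\ref{quasi-mode} on $N_{2h^\delta}$ with source $ihbu_0+\tilde F$, control the collar by the lower bound in~\eqref{below}, and optimize at $2\delta=\frac1{1+\gamma}$. Two structural differences are worth noting. First, for the geometrically controlled region you propose a direct quantitative black-box propagation estimate, whereas the paper argues by contradiction with a semiclassical defect measure (flow invariance, vanishing on $S^*\omega$, hence on $\mathcal{GC}$); the soft route avoids having to justify uniform propagation constants for the Lipschitz metric outside $V$. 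Second, the paper delegates the semigroup-to-stationary reduction (including your $f_1\in H^1$ worry) to \cite[Proposition 1.5]{LeLe14} rather than redoing it.

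There is, however, a genuine gap at your key step. At the critical value $2\delta=\frac1{1+\gamma}$ you bound the source by $\|ihbu_0\|_{L^2(N_{2h^\delta})}\le Ch^{1+2\gamma\delta}\|u_0\|_{L^2(N_{2h^\delta})}$ and claim that, since $h^{2\delta-2}\cdot h^{1+2\gamma\delta}=O(1)$, the term $\|u_0\|_{L^2(N_{2h^\delta})}$ can be absorbed. It cannot: the resulting coefficient is a fixed constant $C$ (the product of the constants in Theorem~\ref{quasi-mode} and in~\eqref{below}), not a quantity $o(1)$ or $<1$, and moreover Theorem~\ref{quasi-mode} only controls $\|u_0\|_{L^2(N_{h^\delta})}$ while the term to be absorbed lives on the strictly larger set $N_{2h^\delta}$. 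Taking $\delta$ larger to make the coefficient $o(1)$ destroys the exponent, since the collar bound then costs $h^{-\gamma\delta}$ with $2\delta(1+\gamma)>1$. The correct treatment is the paper's~\eqref{eq.3}: write
\begin{equation*}
\|hbu_0\|_{L^2(N_{2h^\delta})}^2\le h^2\Bigl(\sup_{N_{2h^\delta}}b\Bigr)\,\|b^{1/2}u_0\|_{L^2}^2\le Ch^{1+2\delta\gamma}\,\|\varphi\|_{L^2}\|f\|_{L^2},
\end{equation*}
using the damping identity~\eqref{estapriori-1} (which you do have, but apply only to the collar). This converts the $bu_0$ source into a term of the same size as the collar contribution, so that no absorption inside Theorem~\ref{quasi-mode} is needed and the exponent $\gamma/(1+\gamma)$ comes out. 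With this replacement (and the $f_1$ issue settled, e.g.\ by citing the reduction of \cite{LeLe14}), your argument closes.
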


\begin{rema}
 A simpler (but weaker) statement would be to assume
 \begin{align*} 
 &(i)\quad (\mathcal{M},g) = (M_1 \times M_2, g_1 \otimes g_2), \quad  q_0 \in M_2,\quad  T= \Sigma = M_1 \times \{q_0\}, \\
   &(ii) \quad    cd(m, \Sigma)^{2 \gamma} \leq b(m) \leq Cd(m, \Sigma)^{2 \gamma}  \quad  \forall m \in M_1 \times U.
 \end{align*}
\end{rema}

 It is classical that for non trivial  dampings $b\geq 0$, the energy of  solution to~\eqref{damped} converge to $0$ as $t$ tend to infinity.  The rate of decay is {\em uniform} (and hence exponential) in energy space if and only if the {\em geometric control condition}~\cite{BaLeRa92, BuGe96} is satisfied. In~\cite{BuZu15}, we explored the question when some trajectories are trapped and exhibited decay rates (assuming more regularity on the initial data).  This latter question was previously studied in a general setting in~\cite{Le96} and on tori in ~\cite{BuHi05, Ph07, AL14} (see also~\cite{BuZw03, BuZw03-1}) and more recently by Leautaud-Lerner~\cite{LeLe14}. The geometric assumptions in~\cite{BuZu15} are much more general than in~\cite{LeLe14} which is essentially restricted to the case of product of flat tori. On the other hand, due to this more favorable geometry, the decay rate in~\cite{LeLe14} is better than in~\cite{BuZu15}. Theorem~\ref{stabi} shows  that Leautaud-Lerner's result (the better decay rate) extends straightforwardly  to the case of product manifolds $(M_1\times M_2, g = g_1 \otimes g_2)$.
 \begin{rema}
 \begin{enumerate}
 \item According to Theorem 1.6 in \cite{LeLe14} the rate of decay in $t$ obtained in Theorem \ref{stabi} above is optimal in general.
 \item Theorem~\ref{quasi-mode} is a propagation result in  the $z$-variable in $B(0,1)$, and since $z$ is actually very close to $0$, the relevant object is $g_2(0)$ (constant coefficients) rather than $g_2(z)$. 
 The smaller $\delta$, the further we need to propagate in the $z$ variable and hence we better the  quasi modes we need to consider (due to the worse error factor $h^{2\delta -2}$)
\item The case $\delta = 1/2$ in Theorem~\ref{quasi-mode} is a particular case of our results in~\cite{BuZu15} (which are actually much more general and hold without the "product" assumption on the geometry). On the other hand, the results in~\cite{BuZu15} are {\em local}, while for  $\delta <1/2$, the estimate~\eqref{qm-N} is {\em non local}. Indeed, trying to replace $\psi$ by $\chi \psi $ will add to the r.h.s. a term ($[h^2 \Delta, \chi]\psi$) which is clearly bounded in $L^2$ by $O(h)$, giving an error of order $O(h^{2\delta -1})\gg 1$ to the final result. On the other hand, as soon as $\delta<1/2$, estimate~\eqref{qm-N} is false without the product structure assumption as can be easily seen on spheres by considering the eigenfunctions $e_n = (x_1 + i x_2)^n $ with eigenvalues $\lambda_n = n(n+d-1)= h_n^{-2}$ which concentrate in an $h_n^{1/2}$-neighborhood of the equator
$$E=  \{ x\in \mathbb{R}^{d+1}: |x| =1,  x_3=\cdots = x_{d+1} =0\}.$$
In this case, we get  $(h_n^2 \Delta +1 ) e_n =0$, but 
$$\Vert e_n  \Vert_{L^2(N_{h_n^{\delta}})}\sim C_1 h_n^{ \frac {d-1} 4 }, \quad \| e_n  \Vert_{L^2(N_{2h_n^{\delta} }  \setminus N_{h_n^{\delta}})}\leq  C_2 e^{-c h_n^{2\delta -1 }}, \quad n \to + \infty, $$
contradicting~\eqref{qm-N} since $2\delta -1 <0$.
\item No smoothness is assumed on the function $b\in L^\infty(\mathcal{M})$. Notice however that (contrarily to the results in~\cite{BuZu15}) the lower bound in~\eqref{below} is {\em not sufficient} (at least with our approach)  and we do need also the upper bound.

\item As will appear clearly in the proof, we could assume that $T$ is isometric to finitely many product manifolds, with possibly different constants $\gamma$, the final decay rate being given by the largest $\gamma$.
\end{enumerate}
 \end{rema}
The paper is organized as follows. We first show how to deduce from Theorem~\ref{quasi-mode} a resolvent estimate which according to previous works by Borichev-Tomilov imply Theorem~\ref{stabi}. Then we prove Theorem~\ref{quasi-mode} by elementary scaling and propagation arguments. 

  \section{From concentration to stabilization results  (Proof of Theorem~\ref{stabi})}\label{se.2}
  According to the works by Borichev-Tomilov~\cite{BoTo10}, stabilization results for the wave  equation are equivalent to resolvent estimates. As a consequence, to prove Theorem~\ref{stabi}, it is enough to prove (see~\cite[Proposition 1.5]{LeLe14})
  \begin{prop}\label{corsimple}
   We keep  the geometric assumptions in Theorem~\ref{stabi}.
Consider for $h>0$ the operator
 \begin{equation}\label{L}
 L_h= - h^2 \Delta_g -1 + i h b, \quad   b \in L^\infty(\mathcal{M}). 
\end{equation}
 Then there exist  $C>0, h_0>0$ such that for all $ 0<h\leq h_0$ 
$$\Vert \varphi  \Vert_{L^2(\mathcal{M})} \leq C h^{-1- \frac{ \gamma} {\gamma +1} } \Vert L_h \varphi \Vert_{L^2(\mathcal{M})},$$
  for all  $\varphi  \in H^2(\mathcal{M})$.
\end{prop}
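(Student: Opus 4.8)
\emph{Strategy.} The plan is to establish the resolvent estimate directly, using Theorem~\ref{quasi-mode} to control $\varphi$ in a thin collar of $T$ and the lower bound in \eqref{below} to control it elsewhere. Fix $\varphi\in H^2(\mathcal M)$ and put $f:=L_h\varphi$. Taking imaginary parts in $\langle L_h\varphi,\varphi\rangle_{L^2(\mathcal M)}$ gives the elementary damping estimate
\[
\|\sqrt b\,\varphi\|_{L^2(\mathcal M)}^2\ \le\ h^{-1}\,\|f\|_{L^2(\mathcal M)}\,\|\varphi\|_{L^2(\mathcal M)},
\]
used repeatedly below. Set $\delta:=\tfrac1{2(\gamma+1)}$ and $\alpha:=\gamma\delta+\tfrac12=\tfrac{2\gamma+1}{2(\gamma+1)}$, so that $2\alpha=1+\tfrac{\gamma}{\gamma+1}$ is the desired exponent; note $\alpha>\tfrac12$. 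Since $b>0$ off $\Theta^{-1}(M_1\times\{0\})$ by \eqref{below}, while $\mathcal T\subset S^*V$ (because $T\subset V$), the trapped set lies over $\Theta^{-1}(M_1\times\{0\})$; shrinking $V$, we may assume $V=\Theta^{-1}(M_1\times B(0,\tfrac12))$. Write $N_\beta:=\Theta^{-1}(M_1\times\{|z|<\beta\})$ (with $|z|$ the $g_2$-distance to the origin); by \eqref{below}, $b\le C\beta^{2\gamma}$ on $N_\beta$ while $b\ge c\beta^{2\gamma}$ on $N_{2\beta}\setminus N_\beta$, and $N_{2h^\delta}\subset V$ for $h$ small.

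\emph{Away from the collar.} Because $\mathcal T\subset S^*V$, the geometric control condition holds outside $V$: every bicharacteristic meets $S^*\omega$ within a fixed time (compactness of $S^*(\mathcal M\setminus V)$ and the definition of $\mathcal T$). The classical semiclassical observability estimate for the real symbol $|\xi|_g^2-1$ --- propagation of singularities, the elliptic estimate on $\{|\xi|_g\ne1\}$, and $\mathrm{ess\,inf}_\omega b>0$ to turn a microlocal cutoff over $\omega$ into $\|\sqrt b\,\varphi\|$; see \cite{Le96,BuZw03,BuZw03-1} --- then gives, after converting the microlocal cutoff over $V$ into a spatial one and invoking the damping estimate,
\[
\|\varphi\|_{L^2(\mathcal M)}^2\ \le\ C\big(\|\varphi\|_{L^2(V)}^2+h^{-1}\|f\|\,\|\varphi\|+h^{-2}\|f\|^2\big).
\]
Moreover on $V\setminus N_{h^\delta}$ one has $b\ge c\,h^{2\gamma\delta}$, so the damping estimate gives at once $\|\varphi\|_{L^2(V\setminus N_{h^\delta})}^2\le C h^{-2\gamma\delta}\|\sqrt b\,\varphi\|^2\le C h^{-2\gamma\delta-1}\|f\|\,\|\varphi\|=C h^{-2\alpha}\|f\|\,\|\varphi\|$. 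As $V=N_{h^\delta}\cup(V\setminus N_{h^\delta})$, it remains to bound $\|\varphi\|_{L^2(N_{h^\delta})}$.

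\emph{Inside the collar (the crux).} Put $\widetilde\varphi:=\varphi\circ\Theta^{-1}$, solving $(h^2\Delta_{\widetilde g}+1)\widetilde\varphi=ihb\widetilde\varphi-\widetilde f$ on $M_1\times B(0,1)$ with $\widetilde f=f\circ\Theta^{-1}$ and $b\asymp|z|^{2\gamma}$. Embed $B(0,1)$ in a closed manifold $M_2$ carrying an extension of $g_2$, choose $\chi\in C_c^\infty(B(0,1))$ equal to $1$ near $0$, and set $\psi:=\chi\widetilde\varphi\in H^2(M_1\times M_2)$, so $(h^2\Delta_{\widetilde g}+1)\psi=F$ with $F=ihb\widetilde\varphi-\widetilde f$ near $0$, the commutator $[h^2\Delta_{\widetilde g},\chi]\widetilde\varphi$ entering $F$ only where $\chi$ is non-constant. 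This is the point that defeats the obstruction of Remark~1.4\,(3) (replacing $\widetilde\varphi$ by $\chi\widetilde\varphi$ adds an $O(h)$ commutator to the source and, since $\delta<\tfrac12$, an error $\gg1$): that commutator is supported away from $N_{2h^\delta}$ for $h$ small, hence does not enter the right-hand side of \eqref{qm-N}. Applying Theorem~\ref{quasi-mode} with this $\delta$ on $M_1\times M_2$, and using $\psi=\widetilde\varphi$ on $N_{2h^\delta}$,
\[
\|\varphi\|_{L^2(N_{h^\delta})}=\|\psi\|_{L^2(N_{h^\delta})}\ \le\ C\big(\|\widetilde\varphi\|_{L^2(N_{2h^\delta}\setminus N_{h^\delta})}+h^{2\delta-2}\|F\|_{L^2(N_{2h^\delta})}\big).
\]
On $N_{2h^\delta}$, $b\le C h^{2\gamma\delta}$; bounding $\|b\widetilde\varphi\|_{L^2(N_{2h^\delta})}\le\|\sqrt b\|_{L^\infty(N_{2h^\delta})}\,\|\sqrt b\,\widetilde\varphi\|_{L^2(\mathcal M)}$ and using the damping estimate, $\|F\|_{L^2(N_{2h^\delta})}\le C h^{\gamma\delta+\frac12}(\|f\|\,\|\varphi\|)^{1/2}+\|f\|$; on $N_{2h^\delta}\setminus N_{h^\delta}$, $b\ge c h^{2\gamma\delta}$, so $\|\widetilde\varphi\|_{L^2(N_{2h^\delta}\setminus N_{h^\delta})}\le C h^{-\gamma\delta}\|\sqrt b\,\widetilde\varphi\|_{L^2(\mathcal M)}\le C h^{-\gamma\delta-\frac12}(\|f\|\,\|\varphi\|)^{1/2}$. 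Since $2(\gamma+1)\delta=1$, one has $-\gamma\delta-\tfrac12=(2+\gamma)\delta-\tfrac32=-\alpha$ and $2\delta-2=-2\alpha$, so all exponents collapse:
\[
\|\varphi\|_{L^2(N_{h^\delta})}\ \le\ C\big(h^{-\alpha}(\|f\|\,\|\varphi\|)^{1/2}+h^{-2\alpha}\|f\|\big).
\]
Estimating $\|b\widetilde\varphi\|$ through $\sqrt b$ and the damping estimate, rather than crudely by $\|\widetilde\varphi\|_{L^2(N_{2h^\delta})}$, is what avoids absorbing into the left-hand side a term with an $O(1)$ (not $o(1)$) constant --- impossible at the critical $\delta=\tfrac1{2(\gamma+1)}$.

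\emph{Conclusion and main difficulty.} Squaring and summing the three estimates (using $\alpha>\tfrac12$ to absorb the $h^{-1}$ and $h^{-2}$ terms) gives $\|\varphi\|_{L^2(\mathcal M)}^2\le C\big(h^{-2\alpha}\|f\|\,\|\varphi\|+h^{-4\alpha}\|f\|^2\big)$; Young's inequality on the first term, absorbing a small multiple of $\|\varphi\|_{L^2(\mathcal M)}^2$, yields $\|\varphi\|_{L^2(\mathcal M)}^2\le C h^{-4\alpha}\|f\|^2$, i.e.\ $\|\varphi\|_{L^2(\mathcal M)}\le C h^{-2\alpha}\|L_h\varphi\|_{L^2(\mathcal M)}$ with $2\alpha=1+\tfrac{\gamma}{\gamma+1}$ --- Proposition~\ref{corsimple}. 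The main difficulty is the bookkeeping in the collar: Theorem~\ref{quasi-mode} only transfers the $L^2$-mass of $\varphi$ from $N_{h^\delta}$ onto the thin shell $N_{2h^\delta}\setminus N_{h^\delta}$, at the price of the factor $h^{2\delta-2}$, so $\delta$ must be tuned so that this loss, the gain $h^{-\gamma\delta}$ from the damping on the shell, and the target exponent are simultaneously compatible --- forcing $\delta=\tfrac1{2(\gamma+1)}$ exactly --- and one must use the damping estimate (not a crude bound) for the error term, and check that turning $\widetilde\varphi$ into a genuine solution on a product manifold via a cutoff leaves \eqref{qm-N} untouched.
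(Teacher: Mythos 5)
Your proof is correct, and its core --- the a priori damping estimate from the imaginary part of $\langle L_h\varphi,\varphi\rangle$, the application of Theorem~\ref{quasi-mode} in the collar with the lower bound of~\eqref{below} on the shell and the upper bound on $N_{2h^\delta}$, and the optimization $2\delta=\frac1{1+\gamma}$ --- is exactly the paper's computation leading to~\eqref{borne}; your exponent bookkeeping ($-\alpha$ and $-2\alpha$ with $2\alpha=1+\frac\gamma{1+\gamma}$) checks out. Where you genuinely diverge is in how the collar estimate is assembled into the global resolvent bound. The paper argues by contradiction: it normalizes $\|\varphi_n\|_{L^2}=1$, deduces $\|\varphi_n\|_{L^2(V)}\to0$ from~\eqref{borne}, and then uses semiclassical defect measures ($h_n$-oscillation, invariance under the flow, vanishing on $S^*\omega$, propagation to $\mathcal{GC}$) to contradict mass one. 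You instead invoke a quantitative observability estimate outside $V$ (geometric control holds there since $T=\Pi_x\mathcal T\subset V$) and add the three regional bounds directly, closing with Young's inequality. This yields a cleaner, constructive chain of inequalities, but be aware that the quantitative observability estimate you cite for an $L^\infty$ damping and a merely Lipschitz/$C^2$ metric is itself normally proved by the very contradiction-plus-defect-measure argument the paper runs, so nothing is gained in terms of underlying machinery --- you have essentially outsourced step (iii) rather than replaced it. Two points where you are actually more careful than the paper and which are worth keeping: the cutoff $\chi$ and the extension of $B(0,1)$ to a closed $M_2$ so that Theorem~\ref{quasi-mode} (stated on a compact product) literally applies, together with the observation that the commutator $[h^2\Delta_{\widetilde g},\chi]\widetilde\varphi$ lives outside $N_{2h^\delta}$ and therefore never enters the right-hand side of~\eqref{qm-N}; and the explicit remark that the source term $ihb\widetilde\varphi$ must be estimated through $\sqrt b$ and the damping identity rather than crudely, since at the critical $\delta$ a crude bound would produce a non-absorbable $O(1)$ multiple of $\|\varphi\|_{L^2(N_{2h^\delta})}$.
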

\begin{proof}
We start with a simple {\em a priori estimate}.  Multiplying both sides of the the equation
\begin{equation}\label{equation}  
  (- h^2 \Delta_g -1 + i h b) \varphi =f.
  \end{equation}
  by $\overline{\varphi}$, integrating by parts on $\mathcal{M}$ and taking real and imaginary parts gives
\begin{align}\label{estapriori-1}
 h \int_\mathcal{M} b(m) \vert \varphi(m) \vert^2\, dv_g(m) &\leq \Vert \varphi \Vert_{L^2(\mathcal{M})}\Vert f  \Vert_{L^2(\mathcal{M})},\\
 \label{estapriori-2}
 h^2\int_\mathcal{M} g_m \big (\nabla_g \varphi(m), \overline{\nabla_g \varphi(m} \big) \, dv_g(m) &\leq  \Vert \varphi \Vert^2_{L^2(\mathcal{M})} +\Vert \varphi \Vert_{L^2(\mathcal{M})}\Vert f  \Vert_{L^2(\mathcal{M})}.
\end{align}  
Now, in the neighborhood $V$ of $T$ we use our isometry $\Theta$ and we set 
\begin{equation}\label{tilde}
 u(p,z) = \varphi( \Theta^{-1}(p,z)), \quad \widetilde{b}(p,z) = b(\Theta^{-1}(p,z)), \quad \widetilde{f}(p,z) = f( \Theta^{-1}(p,z)).  
 \end{equation}
  Then  from \eqref{equation} we obtain the equation on $M_1 \times B(0,1) $ 
$$ ( h^2 \Delta_{\widetilde{g}} +1) u =  i h \widetilde{b} u - \widetilde{f}.  $$

 We can therefore apply  Theorem~\ref{quasi-mode} and we obtain
 \begin{equation}\label{eq.1}
 \| u\|_{L^2(M_1 \times\{ |z| \leq h^{\delta}\})} \leq C  \| u\|_{L^2(M_1\times \{ h^{\delta}\leq  |z| \leq 2h^{\delta}\})} + C h^{2\delta -2} \|   i h \widetilde{b} u - \widetilde{f} \|_{L^2(M_1 \times \{  |z| \leq 2h^{\delta}\})}. 
\end{equation}
On the other hand, from~\eqref{estapriori-1}, and the lower bound in assumption~\eqref{below}, we deduce
\begin{equation}\label{eq.2}
\| u\|^2_{L^2(   M_1 \times \{h^{\delta}\leq  |z| \})} \leq C h^{-1-2\delta \gamma}\Vert \varphi \Vert_{L^2(\mathcal{M})}\Vert f  \Vert_{L^2(\mathcal{M})}
\end{equation}
while from the upperbound in assumption~\eqref{below}, we get
\begin{equation}\label{eq.3}
\begin{aligned}
\|  i h \widetilde{b} u \|^2_{L^2( \{ M_1 \times \{|z| \leq 2h^{\delta}\})} &\leq h^2 \big(\sup_{M_1 \times \{|z| \leq 2h^{\delta}\}}  |\widetilde{b}| \big)\| \widetilde{b}^{1/2} u\|^2_{L^2(M_1 \times \{|z| \leq 2h^{\delta}\}))}\\
& \leq C h^{1+2 \delta \gamma}\Vert \varphi \Vert_{L^2(\mathcal{M})}\Vert f  \Vert_{L^2(\mathcal{\mathcal{M}})}.
\end{aligned}
\end{equation}
Gathering~\eqref{eq.1},~\eqref{eq.2} and~\eqref{eq.3} we obtain,
\begin{equation}
\begin{aligned}
\|  u\|^2_{L^2(M_1 \times B(0,1))}\leq C  h^{-1-2\delta \gamma}&\Vert \varphi \Vert_{L^2(\mathcal{M})} \Vert f  \Vert_{L^2( \mathcal{M})}\\
& +  C h^{4\delta -4} \Bigl(h^{1+2 \delta \gamma}\Vert \varphi \Vert_{L^2(\mathcal{M})}\Vert f  \Vert_{L^2(\mathcal{M})}+ \| f\|_{L^2(\mathcal{M})}^2\Bigr).
\end{aligned}
\end{equation}
Optimizing with respect to $\delta$ leads to the choice $2\delta = \frac 1 {1+ \gamma}$, which gives
\begin{equation*} 
\|  u\|^2_{L^2(M_1 \times B(0,1))}\leq C  h^{-1- \frac \gamma{1+ \gamma}}\Vert \varphi \Vert_{L^2(\mathcal{\mathcal{M}})}\Vert f  \Vert_{L^2(\mathcal{M})} +  C h^{-2 - \frac{ 2\gamma}{1+ \gamma}} \| f\|_{L^2(\mathcal{M})}^2.
\end{equation*}
According to \eqref{tilde} this implies
\begin{equation}\label{borne}
 \Vert \varphi \Vert_{L^2(V)} \leq C  h^{-1- \frac \gamma{1+ \gamma}}\Vert \varphi \Vert_{L^2(\mathcal{\mathcal{M}})}\Vert f  \Vert_{L^2(\mathcal{M})} +  C h^{-2 - \frac{ 2\gamma}{1+ \gamma}} \| f\|_{L^2(\mathcal{M})}^2.
\end{equation}

 We can now conclude the proof of Proposition~\ref{corsimple} by contradiction. If ~\eqref{L} were not true, then there would exists sequences $\varphi_n\in H^2(\mathcal{M}), f_n\in L^2(\mathcal{M}), 0< h_n \rightarrow 0$ such that 
$$(- h_n^2 \Delta_g -1 + i h_n b) \varphi_n = f_n, \qquad \| \varphi_n\|_{L^2(\mathcal{M}) } > \frac n {h_n^{1+ \frac \gamma {1+ \gamma}}} \| f_n\|_{L^2(\mathcal{M})}.
$$ Dividing $\varphi_n$ by its $L^2$-norm, we deduce 
\begin{equation}\label{normalisation}
  \| \varphi_n \|_{L^2(\mathcal{M})} =1, \quad \| f_n \|_{L^2(\mathcal{M})}= o( h_n^{1+ \frac \gamma {1+ \gamma}}), \quad {n\rightarrow + \infty},
  \end{equation}
and from   \eqref{borne}  we get 
\begin{equation}\label{control}
 \lim_{n \to + \infty} \Vert \varphi_n \Vert_{L^2(V)} = 0.
\end{equation}
On the other hand, the sequence $(\varphi_n)$ is bounded in $L^2(\mathcal{M})$, and extracting a subsequence, we can assume that it has a semi-classical measure $\mu$ (see e.g. ~\cite[Th\'eor\`eme 2]{Bu97-1}). We recall that it means that for any symbol $a\in C^\infty_0 ( S^*\mathcal{M})$, 
$$ \lim_{n\rightarrow + \infty} \bigl( a(x, h_n D_x) \varphi_n, \varphi_n\bigr)_{L^2(\mathcal{M})} = \langle \mu, a \rangle.$$
Here, since we work locally, we quantize the symbols $a\in C^\infty_0( T^* \mathbb{R}^d)$ by taking first $\phi \in C^\infty _0( \mathbb{R}^d)$ equal to $1$ near the $x$-projection of the support of $a$ and 
$$ a(x, hD_x) u = \frac 1 {(2\pi h)^n} \int e^{\frac i h (x-y) \cdot \xi} a(x, \xi) \phi(y) u(y) dy d\xi.$$
It is classical that modulo $O(h^\infty)$ smoothing operators, the operator $a(x, hD_x)$ does not depend on the choice of $\phi$.

From~\eqref{estapriori-2},  the sequence $(\varphi_n)$ is $h_n$ oscillating and hence any such semi-classical defect measure has total mass $1= \lim_{n\rightarrow + \infty} \| \varphi_n \|_{L^2(\mathcal{M})}$ (see~\cite[Proposition 4]{Bu97-1}). From~\eqref{estapriori-1} and \eqref{normalisation} we also have (notice that $|b| \leq C |b|^{1/2}$) 
$$ (- h^2 \Delta -1) \varphi_n =  -ih_n b \varphi_n + f_n =o(h_n)_{L^2}, $$ 
and consequently (see~\cite[Proposition 4.4]{Bu02}) the measure $\mu$ is invariant by the bicharacteristic flow.  Since from~\eqref{estapriori-1} it is $0$ on $S^*\omega$, we deduce by propagation that it is also $0$ on $\mathcal{GC}$,  and hence  from~\eqref{control} it is identically null, since $S^*(\mathcal{M}) = \mathcal{T} \cup \mathcal{GC}.$ This gives the contradiction.   
\end{proof}
  \section{Concentration properties (Proof of Theorem~\ref{quasi-mode})}
Recall that we have $(M,g) = (M_1 \times M_2, g_1 \otimes g_2).$ The proof of Theorem~\ref{quasi-mode} follows, after taking scalar products with Laplace eigenfunctions in $M_1$,  from a rescaling argument and  standard (non trapping) resolvent estimates in $M_2$. When the metric $g_2$  is flat, the scaling argument is straightforward, while it requires a little care in the general case (see Lemma~\ref{lem.commut}).  

Let $B(q_0,r) \subset M_2$ be the ball (for the metric $d_2$) of radius $r>0$  centered at  $q_0.$
 
\begin{prop}\label{prop.3.1}
For any $\delta>0$, there exists $C>0, h_0>0$ such that for every $0<h\leq h_0$, every $\tau \in \mathbb{R}$, every  solution $U\in H^2(M_2), G\in L^2(M_2)$ of the equation on $M_2$
$$(- \Delta_{g_2}   - \tau )U  = G $$
we have the estimate
\begin{equation}\label{qm-N-bis}
\Vert U \Vert_{L^2(B(q_0,h^{\delta}))} \leq C\big( \Vert U  \Vert_{L^2(B(q_0,2h^{\delta})\setminus B(q_0,h^{\delta}))} + h^{2\delta } \Vert G \Vert_{L^2(B(q_0,2h^{\delta}))}\big).
 \end{equation}
\end{prop}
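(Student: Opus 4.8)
The plan is to deduce~\eqref{qm-N-bis} from a single non-trapping resolvent estimate on a \emph{fixed} ball, obtained by dilation, and then to establish that estimate by contradiction: an elementary elliptic estimate in the low-frequency regime and a semiclassical defect-measure / propagation argument in the high-frequency one.

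First I would set $r=h^\delta$, use geodesic normal coordinates $z$ centered at $q_0$ — in which, after normalizing $g_2(q_0)$ to the Euclidean metric, $B(q_0,\rho r)=\{|z|<\rho r\}$ — and rescale by $V(y)=U(ry)$, $g^{(r)}(y)=g_2(ry)$. Then $(-\Delta_{g_2}-\tau)U=G$ becomes
$$(-\Delta_{g^{(r)}}-\lambda)V=H\quad\text{on } B(0,2),\qquad \lambda=r^2\tau,\quad H(y)=r^2\,G(ry),$$
where $g^{(r)}$ is, uniformly for $0<r\le r_0$ with $r_0$ small, a $C^\infty$-small perturbation of the flat metric that converges to it as $r\to0$. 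The Jacobian produced by the dilation is common to the three $L^2$-norms in~\eqref{qm-N-bis} and cancels, and the extra factor $r^2=h^{2\delta}$ in front of $G$ is precisely the power occurring there, so~\eqref{qm-N-bis} is equivalent to the bound
\begin{equation}\label{resc}
\|V\|_{L^2(B(0,1))}\le C\bigl(\|V\|_{L^2(B(0,2)\setminus B(0,1))}+\|H\|_{L^2(B(0,2))}\bigr),
\end{equation}
for every solution of $(-\Delta_{g^{(r)}}-\lambda)V=H$, uniformly in $\lambda\in\mathbb{R}$ and $0<r\le r_0$. Justifying this reduction and the $y$-localizations used below, despite $g_2$ not being flat, is where Lemma~\ref{lem.commut} enters.

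I would prove~\eqref{resc} by contradiction. If it fails there exist $r_n\to r_\infty\in[0,r_0]$, $\lambda_n\in\mathbb{R}$, and $V_n,H_n$ with $\|V_n\|_{L^2(B(0,1))}=1$ and $\|V_n\|_{L^2(B(0,2)\setminus B(0,1))}+\|H_n\|_{L^2(B(0,2))}\to0$; up to a subsequence $g^{(r_n)}\to g_\infty$ in $C^\infty(B(0,2))$, a fixed smooth metric still close to flat, and $(\lambda_n)$ is bounded or tends to $\pm\infty$. If $\lambda_n$ stays bounded above with $|\lambda_n|\to\infty$, set $h_n=|\lambda_n|^{-1/2}$: the operator $-h_n^2\Delta_{g^{(r_n)}}+1$ is uniformly elliptic, and cutting off to $\chi V_n$ supported in $B(0,2)$, a Caccioppoli inequality bounds the commutator error by $V_n$ and $H_n$ on the annulus, whence inverting the uniformly elliptic operator $-h_n^2\Delta_{g^{(r_n)}}+1$ on the whole space forces $\|V_n\|_{L^2(B(0,1))}\to0$ — a contradiction. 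If $\lambda_n$ is bounded, the equation bounds $V_n$ in $H^2_{\mathrm{loc}}(B(0,2))$; a weak limit $V_\infty$ solves $(-\Delta_{g_\infty}-\lambda_\infty)V_\infty=0$ and vanishes on the open annulus, so Aronszajn-type unique continuation gives $V_\infty\equiv0$, contradicting $\|V_\infty\|_{L^2(B(0,1))}=1$ (Rellich compactness in the interior).

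The substantial case, and the main obstacle, is $\lambda_n\to+\infty$. With $h_n=\lambda_n^{-1/2}\to0$ the equation reads $(-h_n^2\Delta_{g^{(r_n)}}-1)V_n=h_n^2H_n=o(h_n)_{L^2}$. Pairing with $V_n$ gives a uniform bound on $h_n\nabla V_n$, and interior elliptic regularity supplies the matching semiclassical $H^2$ bound near $\overline{B(0,1)}$, so $V_n$ is $h_n$-oscillating there; hence any semiclassical defect measure $\mu$ of $(V_n)$ has $\mu(\{|y|\le1\})=\lim_n\|V_n\|_{L^2(B(0,1))}^2=1$, while $\mu=0$ over the open annulus $\{1<|y|<2\}$ since $V_n\to0$ in $L^2$ there. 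Off $\{|\xi|_{g_\infty}^2=1\}$ the equation is elliptic, so $\mu$ lives on the characteristic set; and since $-h_n^2\Delta_{g^{(r_n)}}-1$ is self-adjoint with real symbol converging to $|\xi|_{g_\infty}^2-1$, $\mu$ is invariant under the geodesic flow of $g_\infty$ wherever that flow remains in $T^*B(0,2)$. Because $g_\infty$ is a small perturbation of a constant-coefficient metric, there is $T_0<\infty$ such that, for $r_0$ small enough, every unit-speed $g_\infty$-geodesic issued from $\overline{B(0,1)}$ meets $\{1<|y|<2\}$ within time $T_0$ and before leaving $B(0,2)$ (its trace is a near-chord, and chords leave any ball in bounded time). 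Invariance then propagates $\mu=0$ back into $\{|y|<1\}$, so $\mu\equiv0$, contradicting $\mu(\{|y|\le1\})=1$. The delicate point is to make this non-trapping propagation uniform in $r_n$ — controlling escape times for all the $g^{(r_n)}$ simultaneously, and the defect-measure bookkeeping near $\{|y|=1\}$ — everything else being routine.
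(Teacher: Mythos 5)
Your proposal is correct and follows essentially the same route as the paper: rescale by $h^{\delta}$ to a fixed ball with a metric converging to the flat one, then prove the uniform resolvent estimate by contradiction, splitting into the elliptic ($\lambda_n\to-\infty$), bounded-frequency, and hyperbolic ($\lambda_n\to+\infty$) regimes, the last via semiclassical defect measures, a Lipschitz commutator lemma, and non-trapping propagation from the annulus into the ball. The only cosmetic difference is that your fixed-ball estimate omits the $(1+|\lambda|^{1/2})^{-1}$ gain on the source term that the paper's Proposition~\ref{propag} records, which is indeed not needed to recover~\eqref{qm-N-bis}.
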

\subsection{Proof of Theorem \ref{quasi-mode} assuming Proposition \ref{prop.3.1}}
 Let $(e_n)$ be a sequence of eigenfunctions of the Laplace operator on $M_1$ with eigenvalues $- \lambda_n ^2$ forming  an $L^2(M_1)$ orthonormal basis.  For $\psi \in L^2(M)$, we set $\widehat{\psi}_n(q) = \big( \psi(\cdot,q), e_n \big)_{L^2(M_1)}.$ Then we have $\psi(p,q) = \sum_{n\in \mathbb N} \widehat{\psi}_n(q) e_n(p) $ and it is easy to see that with the notations in \eqref{Nbeta}, for $r>0$ 
 \begin{equation}\label{fourier}
\Vert \psi \Vert^2_{L^2(N_r)}  =  \Vert \psi \Vert^2_{L^2(M_1 \times B(q_0, r))} = \sum_{n\in \mathbb N} \Vert \widehat{\psi}_n \Vert^2_{L^2(B(q_0, r)}.
\end{equation}
  Now taking the scalar product of the equation \eqref{qm-N} with $e_n$ we see easily that $(-h^2 \Delta_{g_2} + h^2 \lambda_n^2 -1)\widehat{\psi}_n = \widehat{F}_n$ which can be rewritten as 
 $$ (-  \Delta_{g_2} - \tau )\widehat{\psi}_n = h^{-2}\widehat{F}_n, \quad \tau =    h^{-2} - \lambda_n^2.$$
 Applying Proposition \ref{prop.3.1} to this equation yields
 $$\Vert \widehat{\psi}_n \Vert^2_{L^2(B(q_0,h^{\delta}))} \leq C\big( \Vert \widehat{\psi}_n \Vert^2_{L^2(B(q_0,2h^{\delta})\setminus B(q_0,h^{\delta}))} + h^{4\delta } \Vert h^{-2}\widehat{F}_n \Vert^2_{L^2(B(q_0,2h^{\delta}))}\big).$$
 Taking the sum in $n$ and using \eqref{fourier} we obtain the estimate \eqref{qm-N}.
 
\subsection{Proof of Proposition \ref{prop.3.1}}

Since the problem is local near  $q_0$, after diffeomorphism we can work in a neighborhood of the origin in $\mathbb R_z^k$  and we may  assume that the new metric $g$ satisfies $g\arrowvert_{z=0} = \text{ Id}.$ Then we make the change of variables $z \mapsto x= \frac {z} { h^{\delta}}$ and we set $u(x) = U(h^{\delta } x)$, $F(x) = G( h^\delta x).$  We obtain the equation on $u$ 
$$  (-  \Delta_{g ^h}   - h^{2\delta} \tau )u  = h^{2\delta} F,$$ 
where $g ^h$ is the metric obtained by dilatation $ g ^h (x) = g (h^{\delta} x)$. The family $(g ^h)$ converges in $C^\infty$ topology to the flat metric $g_0 = \text{ Id}$. 
Proposition~\ref{prop.3.1} will follow easily from 
\begin{prop}\label{propag}
Consider a family  $(g_n) $ of metrics on $B(0,2) \subset \mathbb{R}^k$, which converges in Lipschitz topology to the   flat metric when $n\rightarrow +\infty$. Then  there exists $C>0, N_0 >0$ such that for every $n\geq N_0$,  $\tau \in \mathbb{R}$,  $u\in H^2(B(0,2)), f\in L^2(B(0,2))$ solutions of the equation on $B(0,2)$
$$(- \Delta_{g_n}   - \tau )u  = f $$
we have the estimate
\begin{equation}\label{qm-N-ter}
\Vert u \Vert_{L^2(B(0,1))} \leq C\big( \Vert u  \Vert_{L^2(B(0,2)\setminus B(0,1))} + \frac{ 1} { 1+ |\tau|^{1/2}} \Vert f  \Vert_{L^2(B(0,2))}\big)
 \end{equation}
 (notice that since $g_n$ converges to the flat metric the choice of the metric to define the $L^2$-norms above is of no importance).
\end{prop}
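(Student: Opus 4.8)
The plan is to argue by contradiction and compactness, reducing \eqref{qm-N-ter} to a statement about a semiclassical defect measure. Suppose the estimate fails; then there are $n_k\to+\infty$, $\tau_k\in\mathbb R$, $u_k\in H^2(B(0,2))$ and $f_k\in L^2(B(0,2))$ solving $(-\Delta_{g_{n_k}}-\tau_k)u_k=f_k$, with $\|u_k\|_{L^2(B(0,1))}=1$ and $\|u_k\|_{L^2(B(0,2)\setminus B(0,1))}+(1+|\tau_k|^{1/2})^{-1}\|f_k\|_{L^2(B(0,2))}\to 0$; in particular $\|u_k\|_{L^2(B(0,2))}\to 1$. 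Testing the equation against $\overline{u_k}$ truncated to $B(0,\tfrac32)$ and integrating by parts (using only uniform ellipticity and uniform Lipschitz bounds on $g_{n_k}$) gives the a priori bound $\|\nabla u_k\|_{L^2(B(0,3/2))}\le C(1+|\tau_k|^{1/2})$. After extracting a subsequence, it suffices to treat separately the three regimes $(\tau_k)$ bounded, $\tau_k\to-\infty$, and $\tau_k\to+\infty$.

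\emph{The two elliptic regimes.} If $(\tau_k)$ is bounded, then $\|u_k\|_{H^1(B(0,3/2))}$ is bounded and $\|f_k\|_{L^2}\to 0$; by Rellich a subsequence converges in $L^2(B(0,3/2))$ to a function $u$ which is a weak (hence, $g_0$ being flat, smooth) solution of $(-\Delta_{g_0}-\tau_\infty)u=0$, satisfies $\|u\|_{L^2(B(0,1))}=1$, and vanishes on the open annulus $\{1<|x|<\tfrac32\}$ where $u_k\to 0$; unique continuation then forces $u\equiv 0$, a contradiction. If $\tau_k\to-\infty$, put $h_k=|\tau_k|^{-1/2}$; the equation reads $(-h_k^2\Delta_{g_{n_k}}+1)u_k=h_k^2 f_k$ and the operator on the left is uniformly elliptic with symbol $\ge 1$. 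Pairing with $\overline{u_k}\chi^2$, $\chi\equiv 1$ on $B(0,\tfrac32)$, gives directly $\|u_k\|_{L^2(B(0,3/2))}^2\le C\big(h_k^4\|f_k\|_{L^2}^2+h_k^2\|u_k\|_{L^2(B(0,2))}^2\big)\to 0$, again contradicting $\|u_k\|_{L^2(B(0,1))}=1$. Observe that neither case uses the geometry.

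\emph{The Helmholtz regime $\tau_k\to+\infty$.} With $h_k=|\tau_k|^{-1/2}$ the equation becomes $(-h_k^2\Delta_{g_{n_k}}-1)u_k=r_k$ with $\|r_k\|_{L^2(B(0,2))}=o(h_k)$; the family $(u_k)$ is bounded in $L^2$ and, by the a priori bound, $h_k$-oscillating. Extract a semiclassical defect measure $\mu$ on $T^*B(0,\tfrac32)$. The four facts I would establish are: (i) $\mu\neq 0$, since $\langle\mu,\chi^2\rangle=\lim_k\|\chi u_k\|_{L^2}^2\ge\lim_k\|u_k\|_{L^2(B(0,1))}^2=1$ for $\chi\in C^\infty_0(B(0,\tfrac32))$ with $\chi\equiv 1$ on $\overline{B(0,1)}$; (ii) $\supp\mu\subset\{|\xi|^2_{g_0(x)}=1\}$, by testing the equation against $a(x,h_kD)u_k$ and letting $k\to\infty$; (iii) $\mu\equiv 0$ over the open annulus $\{1<|x|<\tfrac32\}$, since $u_k\to 0$ there in $L^2$; and (iv) $\mu$ is invariant under the geodesic flow of the flat metric $g_0$, i.e.\ under straight lines, by commuting the equation with $a(x,h_kD)$. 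Granting these, pick $(x_0,\xi_0)\in\supp\mu$, so $|x_0|\le 1$ and $|\xi_0|_{g_0}=1$; the half-line $s\mapsto x_0+s\xi_0$ leaves $\overline{B(0,1)}$ and, while still inside $B(0,\tfrac32)$, enters the open annulus $\{1<|x|<\tfrac32\}$ at some $s>0$, and by (iv) the corresponding point again belongs to $\supp\mu$, contradicting (iii). This non-trapping property of the ball is precisely where the geometry enters; since a contradiction has been reached in each regime, \eqref{qm-N-ter} holds.

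\emph{The main obstacle.} The only genuine difficulty lies in the symbolic steps (ii) and (iv), because the $g_{n_k}$ are merely Lipschitz: in expanding $[-h_k^2\Delta_{g_{n_k}},a(x,h_kD)]$ one must commute $a(x,h_kD)$ past multiplication by the coefficients of $g_{n_k}$, and for a generic Lipschitz coefficient such a commutator is only $O(h_k)$, which the factor $h_k^{-1}$ in the commutator identity would destroy. What rescues the argument — and the reason the limit metric is taken to be \emph{flat} — is that $\nabla g_{n_k}\to 0$ in $L^\infty$, so these commutators are in fact $o(h_k)$, exactly absorbing the loss; this is the content of Lemma~\ref{lem.commut}. (Alternatively one runs the commutator computation by hand, keeping every factor $h_k\nabla$ paired against $u_k$ so that it contributes only $O(1)$ in $L^2$, as in the integration-by-parts estimates above.) With this in hand, (ii) and (iv) follow from the standard arguments, cf.\ \cite{Bu97-1, Bu02}.
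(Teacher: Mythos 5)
Your proposal is correct and follows essentially the same route as the paper: contradiction argument, the same three regimes in $\tau$ (elliptic $\tau\to-\infty$, bounded with Rellich and unique continuation, hyperbolic with a semiclassical measure at scale $|\tau|^{-1/2}$), propagation along straight lines out of the non-trapping ball, and the same identification of the key technical point — that commutators with the merely Lipschitz coefficients are controlled because $\nabla g_n\to 0$ in $L^\infty$, which is exactly the role of Lemma~\ref{lem.commut} combined with \eqref{cv-metr} in the paper.
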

\begin{rema} Proposition~\ref{propag} is standard for the fixed metric $g_0 = \text{ Id}$ (see e.g.~\cite[Section 3]{Bu02}), as the annulus $\{x: 1<|x|<2\}$ controls geometrically the ball $ B(0,1)$. As a consequence, in the special case of~\cite{LeLe14} when $g = g_0$ (and hence $g_n$ is also the standard flat metric), the proof of Theorem~\ref{quasi-mode} is completed. In the general case, we only have to verify that the usual proof can handle the varying metric through a perturbation argument, which is precisely what we do below. It is worth noticing that the proof belows implies that the propagation estimates involved in exact controlability results which are known to hold for $C^2$ metrics, see~\cite{Bu97}, are actually {\em stable} by {\em small Lipschitz} perturbations of the metric.
\end{rema}  
For $r>0$ we shall set  $B_r = B(0,r) \subset \mathbb R^k.$ 

To prove Proposition~\ref{propag} we argue by contradiction. Otherwise, there would exist  sequences, $\sigma_n \to + \infty$, $(\tau_n)\subset \mathbb{R}$, $(u_n) \subset H^2(B_2), (f_n) \subset  L^2(B_2)$ such that
\begin{align}\label{eq-1}
 (- \Delta_{g_{\sigma_n}}   - \tau_n )u_n  &= f_n,\\ \label{eq-2}
1= \| u_{n} \|_{L^2( B_1)}   &> n \big( \Vert u_n \Vert_{L^2(B_2\setminus B_1)} + \frac{ 1} { 1+ |\tau_n|^{1/2}} \Vert f_n  \Vert_{L^2(B_2)}\big) 
 \end{align}
We now distinguish three cases 
\begin{itemize}
\item $\liminf_{n \rightarrow + \infty} {\tau_n} = - \infty$ (elliptic case)
\item $( \tau_n)_{N\in \mathbb{N}} $ bounded (low frequency case)
\item $\limsup_{n \rightarrow + \infty} {\tau_n}= + \infty$ (hyperbolic case)
\end{itemize}
In the first case, working with a subsequence we may assume that $\lim_{n \to + \infty} \tau_n = -\infty.$ Let $\zeta \in C^\infty_0 (B_2)$ equal to $1$ on $B_{3/2} $.  Multiplying~\eqref{eq-1} by $\zeta \overline{u}_n$, integrating by parts and taking the real part gives 
\begin{equation}
 \Bigl| \int (g_{\sigma_n} \big ( \nabla_{g_{\sigma_n}} u_n, \overline{ \nabla_{g_{\sigma_n}} (\zeta u_n))} \big) - \zeta \tau_n |u_n|^2 )  \,dv_{g_{\sigma_n}}   \Bigr|\leq \Vert u_n  \Vert_{L^2(B_2)}\Vert f_n  \Vert_{L^2(B_2)}
\end{equation}  
which implies (after another integration by parts)
\begin{equation}\label{estapriori}
\begin{aligned}
 \Bigl| \int \zeta g_{\sigma_n} \big ( \nabla_{g_{\sigma_n}} u_n, \overline{ \nabla_{g_{\sigma_n}}  u_n)} \big) &- \bigl(\tau_n \zeta + \frac{  \Delta_{g_{\sigma_n}} (\zeta)} 2 \bigr) |u_n|^2\, dv_{g_{\sigma_n}}    \Bigr|\\
 &\leq \Vert u_n \Vert_{L^2(B_2)}\Vert f_n  \Vert_{L^2(B_2)}= o(|\tau_n|^{1/2}), \quad {n\rightarrow + \infty}.
\end{aligned}
\end{equation}
Since $\Delta_{g_{\sigma_n}} \zeta$ is supported in $\{ 1\leq |x| \leq 2\}$ and $\| u_n\|_{L^2( 1< |x| <2)} = o(1)$, we deduce if $\tau_n \rightarrow - \infty $
$$ \lim_{n\rightarrow+ \infty} \int \zeta |u_n|^2 dx =0,$$ which contradicts \eqref{eq-2}.

In the second case (low frequency), we can assume (after extracting a subsequence) that $\tau_n \rightarrow \tau$ and~\eqref{estapriori} shows that the sequence $(u_n\arrowvert_{B_ {3/2}}) $ is bounded in $H^1(B_{3/2}  )$. Hence, (after taking a subsequence), we can assume that it converges weakly in $H^1( B _{3/2})$ (and hence strongly in $L^2(B_{3/2}$). Due to the convergence of the family of metrics, we get 
$$ -\Delta_{g_{\sigma_n}}u_{n} = - \Delta_0 u_n+ o(1)_{H^{-1}}, \quad  ( \Delta_0 = \sum_{i = 1}^k \partial_j^2),  $$   
and according to~\eqref{eq-2} this implies that the limit $u$ satisfies 
$$ (- \Delta_0 - \tau ) u =0 \text{ in } \mathcal{D}' (B_{3/2}), \qquad u\mid_{1< |z|<3/2}=0.$$
Uniqueness for solutions of second order elliptic operators implies that $u =0$ which is contradictory with the strong convergence of $(u_n) $ in $L^2( B_{3/2})$ and~\eqref{eq-2}.

Finally it remains to study the last case (hyperbolic). Taking a subsequence, we can assume $\tau_n \rightarrow + \infty.$ Moreover dividing both members of \eqref{eq-1} by $\tau_n$ we see that $u_n$ is solution of an equation of type $(P(x, \tau_n^{-\mez} D_x) - 1)u_n = \tau_n^{-1} f_n \to 0$ in $L^2(B(0,2)).$  The sequence $(u_n\arrowvert_{|x|<3/2})$ has   a semi-classical measure  $\nu$     with scale 
 $$\widetilde{h}_n = \tau_n ^{-1/2},$$
(see the end of Section~\ref{se.2} for a few fact about these measures). Notice that this new semi-classical parameter $\widetilde{h} _n$ has no relationship with the parameter $h$ in Theorem~\ref{quasi-mode} First of all multiplying both  sides of \eqref{estapriori} by $\widetilde{h}_n^2 = \tau_n^{-1}$ and using the fact that $\Vert u_n \Vert_{L^2(B_2)}$ is uniformly bounded we deduce that there exists $C>0$ such that
\begin{equation}\label{estapri2}
\widetilde{h}_n \Vert \nabla_x u_n \Vert_{L^2(B_{3/2})} \leq C, \quad \forall n \in \mathbb N. 
\end{equation}

Using again~\eqref{estapriori} shows that the sequence $u_n\arrowvert_{|x| <3/2}$ is $\widetilde{h}_n$-oscillatory (and hence the measure $\nu$ has total mass $1= \lim_{n\rightarrow +\infty} \| u_n \|_{L^2(B_ {3/2})}^2$). Now setting $D_n = \det ((g_{\sigma_n})_{ij})$ we can write
\begin{equation}\label{delta-g}
  \Delta_{g_{\sigma_n}}= \Delta_0 +   \sum_{i,j = 1}^k \partial_i \big\{   (g^{ij}_{{\sigma_n}} - \delta_{ij})  \partial_j \big\} + \frac{1}{2D_n}\sum_{i,j=1}^k  g_{{\sigma_n}}^{ij} (\partial_i   D_n)  \partial_j. 
\end{equation}
The only point of importance below will be that
\begin{equation}\label{cv-metr} 
  \lim_{n\rightarrow + \infty} \| g^{ij}_{\sigma_n}- \delta_{ij}\|_{W^{1, \infty} (B_2)}=0, \qquad  \lim_{n\rightarrow + \infty} \| D_n- 1\|_{W^{1, \infty} (B_2)}=0.
  \end{equation}
\begin{prop}\label{lem.propa}
The measure $\nu$ is supported in the set 
$\{ (x, \zeta): |\zeta\vert =1\}$ and is invariant by the bicharacteristic flow associated to the metric $g_0$:
$$ 2\xi\cdot \nabla_x \nu =0.$$
\end{prop}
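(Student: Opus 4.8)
The plan is to prove Proposition~\ref{lem.propa} by a standard positive-commutator (Gårding) argument, carefully tracking how the Lipschitz convergence \eqref{cv-metr} of the metrics allows us to treat $-\Delta_{g_{\sigma_n}}$ as a small perturbation of $-\Delta_0$. First I would record the scaled equation: dividing \eqref{eq-1} by $\tau_n$ and writing $\htilde_n = \tau_n^{-1/2}$, we have
$$\bigl(-\htilde_n^2\Delta_{g_{\sigma_n}} - 1\bigr)u_n = \htilde_n^2 f_n = o(\htilde_n)_{L^2(B_2)},$$
using \eqref{eq-2}. The first claim — that $\nu$ is carried by $\{|\zeta|=1\}$ — follows from ellipticity away from that set: for $a\in C^\infty_0(T^*B_{3/2})$ supported where the principal symbol $p_0(x,\zeta)=|\zeta|^2_{g_0}=|\zeta|^2$ stays away from $1$, one inverts $p_0-1$ to produce a parametrix, and the error terms coming from $\Delta_{g_{\sigma_n}}-\Delta_0$ are controlled in operator norm on $L^2$ by $\|g^{ij}_{\sigma_n}-\delta_{ij}\|_{W^{1,\infty}}+\|D_n-1\|_{W^{1,\infty}}\to 0$ together with the a priori bound \eqref{estapri2}; so $\langle\nu,a\rangle = \lim_n (a(x,\htilde_n D_x)u_n,u_n)=0$.

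Next, for invariance, I would test the equation with $[\,-\htilde_n^2\Delta_{g_{\sigma_n}},\, A_n\,]$ where $A_n = a(x,\htilde_n D_x)$ for a real-valued $a\in C^\infty_0(T^*B_{3/2})$, whose support is chosen inside $\{|x|<3/2\}$ so that no boundary terms intrude (here the localization to $B_{3/2}$ and the oscillation property derived from \eqref{estapriori} are what make the commutator computation legitimate despite $u_n$ only being defined on $B_2$). On one hand, $([\,-\htilde_n^2\Delta_{g_{\sigma_n}}-1,A_n]u_n,u_n) = 2i\RE\bigl((A_nu_n,(-\htilde_n^2\Delta_{g_{\sigma_n}}-1)u_n)\bigr) = o(\htilde_n)$ because the source is $o(\htilde_n)_{L^2}$ and $A_n$ is bounded on $L^2$. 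On the other hand, by symbolic calculus the commutator $[-\htilde_n^2\Delta_0, A_n]$ has principal symbol $\frac{\htilde_n}{i}\{|\zeta|^2, a\} = \frac{\htilde_n}{i}\,2\zeta\cdot\nabla_x a$, so that $\frac{1}{\htilde_n}([-\htilde_n^2\Delta_0,A_n]u_n,u_n)\to \langle\nu, 2\zeta\cdot\nabla_x a\rangle$. It therefore suffices to show that the difference $\frac1{\htilde_n}\bigl([-\htilde_n^2\Delta_{g_{\sigma_n}},A_n]-[-\htilde_n^2\Delta_0,A_n]\bigr)u_n$ tends to $0$ weakly against $u_n$.

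The main obstacle is precisely this last point: the perturbation $\htilde_n^2(\Delta_{g_{\sigma_n}}-\Delta_0)$, written via \eqref{delta-g}, is a second-order operator with coefficients that are only $W^{1,\infty}$ and tending to $0$ in that norm, and one must see that commuting it with $A_n$ and dividing by $\htilde_n$ still yields something negligible. I would split \eqref{delta-g} into the divergence-form piece $\sum_{i,j}\partial_i\{(g^{ij}_{\sigma_n}-\delta_{ij})\partial_j\cdot\}$ and the first-order piece; for the divergence-form piece one integrates by parts to move one derivative onto $A_nu_n$ or $u_n$, reducing matters to $\htilde_n\nabla_x(A_nu_n)$ and $\htilde_n\nabla_x u_n$ (both $O(1)$ in $L^2$ by \eqref{estapri2} and boundedness of $a(x,\htilde_n D_x)\langle\htilde_n D_x\rangle$ on appropriate spaces) multiplied by coefficients that are $o(1)$ in $L^\infty$, so after dividing by $\htilde_n$... here one must be slightly careful: the gain is that the commutator structure produces an extra factor $\htilde_n$ from the symbol calculus, i.e. $[b(x)(\htilde_n\partial_i)(\htilde_n\partial_j), A_n]$ has an $O(\htilde_n)$ bound in operator norm on $L^2$ with constant controlled by $\|b\|_{W^{1,\infty}}\to 0$; dividing by $\htilde_n$ leaves $o(1)$. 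The first-order piece $\frac1{2D_n}g^{ij}_{\sigma_n}(\partial_i D_n)\partial_j$ is lower order: $\htilde_n^2$ times a first-order operator with $L^\infty$ coefficients is $O(\htilde_n)$ in operator norm on $H^1_{\htilde_n}\to L^2$, and its commutator with $A_n$ divided by $\htilde_n$ is again $o(1)$ using $\|\partial_i D_n\|_{L^\infty}\to 0$. Assembling these estimates gives $\langle\nu, 2\zeta\cdot\nabla_x a\rangle = 0$ for all such $a$, which is the claimed invariance $2\xi\cdot\nabla_x\nu = 0$, completing the proof of Proposition~\ref{lem.propa}.
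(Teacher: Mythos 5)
Your overall strategy coincides with the paper's: test the equation against $a(x,\htilde_n D_x)$ to localize $\nu$ on $\{|\zeta|=1\}$, then run the commutator argument with $[-\htilde_n^2\Delta_{g_{\sigma_n}},a(x,\htilde_n D_x)]$, writing $\Delta_{g_{\sigma_n}}=\Delta_0+(\text{divergence-form perturbation})+(\text{first-order term})$ exactly as in \eqref{delta-g} and using \eqref{cv-metr} and \eqref{estapri2} to kill the perturbation terms. (For the support statement the paper argues slightly more directly, showing $\langle\nu,(|\zeta|^2-1)a\rangle=0$ for \emph{all} $a$ rather than building a parametrix off the characteristic set, but that is a cosmetic difference.)

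There is, however, one genuine gap, and it sits at the only point where the Lipschitz regularity of the metric actually matters. You write that ``the commutator structure produces an extra factor $\htilde_n$ \emph{from the symbol calculus}'', applied to $[b(x)(\htilde_n\partial_i)(\htilde_n\partial_j),A_n]$ with $b=g^{ij}_{\sigma_n}-\delta_{ij}\in W^{1,\infty}$. Semiclassical symbolic calculus does not apply to multiplication by a merely Lipschitz function, so the bound $\|[a(x,\htilde_n D_x),b]\|_{\mathcal{L}(L^2)}\leq C\htilde_n\|\nabla_x b\|_{L^\infty}$ that you need cannot be quoted; it has to be proved. The paper isolates exactly this as Lemma~\ref{lem.commut} and proves it by an explicit estimate on the kernel $K(x,x')$ of the commutator (splitting $|x-x'|\lessgtr\htilde_n$, integrating by parts in $\zeta$ in the far region) followed by Schur's lemma. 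Once that lemma is in hand, your splitting of the divergence-form term into $[A_n,\htilde_n\partial_j]$-pieces and $[A_n,g^{ij}_{\sigma_n}-\delta_{ij}]$-pieces sandwiched between $\htilde_n\partial$'s, controlled via \eqref{estapri2}, goes through verbatim, as does your treatment of the first-order term (for which no commutator gain is even needed: unfolding and using $\|\partial_i D_n\|_{L^\infty}\to0$ suffices). A minor slip: in your identity for the tested commutator, $2i\RE$ should be $2i\IM$, and self-adjointness of $\Delta_{g_{\sigma_n}}$ is with respect to $dv_{g_{\sigma_n}}$; neither affects the argument.
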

The contradiction now follows since by \eqref{eq-2} we have $\| u_n\|_{L^2( 1<|z| <2)} \rightarrow 0$ which implies that $\nu \arrowvert_{1.1< |x| < 1.9} =0$ and by propagation that $\nu \arrowvert_{|x| < 3/2} =0$. It remains to prove Proposition~\ref{lem.propa}. 
\begin{proof}
We have for $a$ with compact support (in the $x$ variable) in $B(0,2)$,
\begin{equation}\label{ellipt}
\begin{aligned}
  & \bigl( a(x, \widetilde{h}_n D_x) ( \widetilde{h}_n ^2 \Delta_{g_{\sigma_n}} -1 ) u_n , u_n \bigr)_{L^2}  = (1) + (2 + (3),\\
  & (1) =  \bigl( a(x, \widetilde{h}_n D_x) ( \widetilde{h}_n ^2 \Delta_{0} - 1) u_n , u_n \bigr)_{L^2},\\
 &(2) = \sum_{i,j} \bigl(  (g^{ij}_ {\sigma_n}  - \delta_{ij})   \,  \widetilde{h}_n\partial_j  u_n ,   \widetilde{h}_n \partial_i  a^*(x, \widetilde{h}_n D_x)u_n \bigr)_{L^2},\\ 
&(3) =   \widetilde{h}_n \sum_{ij} \bigl( \frac{1}{2 D_n} (g^{ij}_ {\sigma_n} \partial_i D_n  \widetilde{h}_n \partial_{j} u_n,  a^*(x, \widetilde{h}_n D_x)u_n\bigr)_{L^2}.  
 \end{aligned}
 \end{equation}
 On one hand, using the symbolic calculus,  the term  $(1)$  tends to 
 $$\langle \nu, (|\zeta|^2 -1) a(x, \zeta) \rangle.$$ 
 
Now using~\eqref{estapri2}  and \eqref{cv-metr} we see easily that the terms $(2)$ and $(3)$ tend to zero when $n\rightarrow + \infty$. On the other hand,  the l.h.side in~\eqref{ellipt} is equal to 
 $$\widetilde{h}_n^2 \bigl( a(x, \widetilde{h}_n D_x) f_n , u_n \bigr)_{L^2}.$$
 and according to~\eqref{eq-2} tends to $0$. We deduce 
 $$ \forall a \in C^\infty_0 ( \mathbb{R}^{2k}, \langle \nu, (|\zeta|^2 -1) a(x, \zeta) \rangle\Rightarrow \text{ supp } (\nu ) \subset \{ (x, \zeta); |\zeta|^2 =1\}.$$
 To prove the second part in Proposition~\ref{lem.propa}, we shall use the following lemma
 \begin{lemm}\label{lem.commut}
 Let $a\in C^\infty_0 ( \mathbb{R}^{2k})$, and $b \in W^{1, \infty} (\mathbb{R}^{2k})$. Then 
 \begin{equation}\label{eq.commut}
 \| [ a(x, \widetilde{h}_n D_x), b ] \|_{\mathcal{L}( L^2)} \leq C \widetilde{h}_n \| \nabla_x b \| _{L^\infty}.
 \end{equation}
 \end{lemm}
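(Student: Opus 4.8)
The plan is to prove the commutator estimate \eqref{eq.commut} by a standard kernel-and-symbol argument, exploiting that $b$ is only multiplication by a $W^{1,\infty}$ function while $a(x,\widetilde h_n D_x)$ is a nice pseudodifferential operator with compactly supported symbol. First I would write down the Schwartz kernel of the commutator $[a(x,\widetilde h_n D_x),b]$. With the quantization fixed in the paper, and writing $\phi\in C^\infty_0$ equal to $1$ near the $x$-support of $a$, the kernel of $a(x,\widetilde h_n D_x)$ is
$$K_n(x,y)=\frac{1}{(2\pi\widetilde h_n)^k}\int e^{\frac i{\widetilde h_n}(x-y)\cdot\xi}a(x,\xi)\phi(y)\,d\xi,$$
so the kernel of the commutator is $K_n(x,y)\bigl(b(y)-b(x)\bigr)$. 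The whole point is that the extra factor $b(y)-b(x)$ carries a gain: since $b\in W^{1,\infty}$, $|b(x)-b(y)|\le \|\nabla b\|_{L^\infty}\,|x-y|$ (using that the segment stays in a fixed compact set, or after harmlessly truncating $b$). The factor $|x-y|$ is then converted into a derivative falling on the exponential, $|x-y|\,e^{\frac i{\widetilde h_n}(x-y)\cdot\xi}$, which under an integration by parts in $\xi$ produces a factor $\widetilde h_n$ together with a $\xi$-derivative of $a$; since $a\in C^\infty_0$, $\partial_\xi a$ is again a perfectly good compactly supported symbol. Concretely, $(x-y)e^{\frac i{\widetilde h_n}(x-y)\cdot\xi}=-\,\widetilde h_n\,\frac1i\,\nabla_\xi e^{\frac i{\widetilde h_n}(x-y)\cdot\xi}$, so integrating by parts turns $K_n(x,y)(b(y)-b(x))$ into $\widetilde h_n$ times the kernel of an operator of the form $\sum_j r_j(x)\,\widetilde c_j(x,\widetilde h_n D_x)$ with each $\widetilde c_j$ of the schematic form $\partial_{\xi_j}a$ (possibly multiplied by the bounded function $\frac{b(y)-b(x)}{|x-y|}\cdot\frac{(x-y)_j}{|x-y|}$, which must be handled as a bounded — not smooth — multiplier).

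The second step is to bound the resulting operator in $\mathcal L(L^2)$, uniformly in $n$ (equivalently uniformly in $\widetilde h_n$). The cleanest way is a Schur test on the rescaled kernel. After the integration by parts one has a kernel of the form $\widetilde h_n$ times
$$\widetilde K_n(x,y)=\frac{1}{(2\pi\widetilde h_n)^k}\int e^{\frac i{\widetilde h_n}(x-y)\cdot\xi}\,\widetilde a(x,\xi)\,m(x,y)\,\phi(y)\,d\xi,$$
where $\widetilde a\in C^\infty_0$ and $m$ is bounded with $\|m\|_{L^\infty}\le C\|\nabla b\|_{L^\infty}$ (the bounded multiplier $m$ can simply be kept outside as a left/right composition by the multiplication operators $M_m$, which are bounded on $L^2$ with norm $\|m\|_{L^\infty}$; alternatively estimate directly). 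For the remaining genuinely pseudodifferential piece with smooth compactly supported symbol $\widetilde a$, repeated non-stationary-phase integration by parts in $\xi$ (now using smoothness of $\widetilde a$) yields the standard bound $|\widetilde K_n(x,y)|\le C_N\,\widetilde h_n^{-k}\,\langle (x-y)/\widetilde h_n\rangle^{-N}$ for every $N$, which by the Schur test gives an $L^2\to L^2$ bound uniform in $\widetilde h_n$. Multiplying back the prefactor $\widetilde h_n$ and tracking that $\|m\|_{L^\infty}\le C\|\nabla b\|_{L^\infty}$ gives exactly $\|[a(x,\widetilde h_n D_x),b]\|_{\mathcal L(L^2)}\le C\widetilde h_n\|\nabla_x b\|_{L^\infty}$.

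I expect the only delicate point to be the \emph{low regularity of $b$}: the usual proof of such a commutator estimate uses the full symbolic calculus and expands $[a,b]$ in powers of $\widetilde h_n$, with the leading term $\widetilde h_n\,\partial_\xi a\cdot\partial_x b$ — but here $\partial_x b\in L^\infty$ only, so one cannot symmetrize or iterate the expansion. The fix, as sketched above, is to stop the expansion after the first integration by parts and treat the quotient $\frac{(x-y)_j\bigl(b(x)-b(y)\bigr)}{|x-y|^2}$ as a \emph{bounded} (not smooth) kernel-multiplier, estimated by $\|\nabla b\|_{L^\infty}$ via the mean value inequality, and then invoke Schur. Care is also needed that the segment $[x,y]$ used in the mean value inequality lies in the domain of $b$; since $a$ has compact $x$-support and the kernel decays rapidly in $(x-y)/\widetilde h_n$, one can first multiply $b$ by a cutoff equal to $1$ on a large ball without changing the conclusion, or simply note the rapid decay confines $y$ to a neighborhood of $\operatorname{supp}_x a$. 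Everything else is the routine non-stationary-phase/Schur machinery.
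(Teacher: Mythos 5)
Your proof is correct and follows essentially the same route as the paper's: write the Schwartz kernel of the commutator, use the Lipschitz bound $|b(x)-b(x')|\le \|\nabla b\|_{L^\infty}|x-x'|$ to gain a factor $|x-x'|$, convert it into a factor $\widetilde h_n$ by integration by parts in $\xi$, and conclude with Schur's lemma (the paper organizes this as a near/far diagonal split with $N$ integrations by parts in the far region, rather than your single global integration by parts, but the mechanism is identical). The only wobble is the suggestion that the two-variable multiplier $m(x,y)$ could be pulled out as a left/right composition with multiplication operators — it cannot, since it depends jointly on $x$ and $y$ — but your fallback of bounding $|m|\le\|\nabla b\|_{L^\infty}$ pointwise inside the kernel and then applying Schur is exactly what is needed.
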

 \begin{proof}
 The kernel of the operator $[ a(x, \widetilde{h}_n D_x), b ]$ is equal to (here $\phi\in C^\infty_0 (\mathbb{R}^k)$ is equal to $1$ on the $x$-projection of the support of $a$)
 $$ K(x, x') = \frac 1 { (2\pi \widetilde{h}_n )^k} \int_{\zeta \in \mathbb{R}^k} e^{\frac i {\widetilde{h}_n} \zeta\cdot  ( x- x') } a(x, \zeta) ( b(x)- b(x')) \phi(x')d \zeta,$$
 which is for $|x-x'|\leq \widetilde{h}_n$  (since the support of $a$ is compact) bounded by 
 \begin{equation}\label{schur1}
  C \widetilde{h}_n ^{-k}\| \nabla_x b\|_{L^\infty} |x-x'|,
  \end{equation}
  while for $|x-x'|\geq \widetilde{h}_n$
 we can integrate by parts using the identity 
 $$ \frac{ \widetilde{h}_n(x-x') } { i \vert x-x' \vert ^2} \cdot \nabla_\zeta (e^{\frac i {\widetilde{h}_n} \zeta \cdot ( x- x') }) = e^{\frac i {\widetilde{h}_n} \zeta \cdot ( x- x') },$$
 which gives 
 $$K(x,x') = \frac 1 { (2\pi \widetilde{h}_n )^k} \int_{\zeta \in \mathbb{R}^k} e^{\frac i {\widetilde{h}_n} \zeta \cdot( x- x') }  \Bigl( \widetilde{h}_n \frac{(x-x') \cdot \nabla_\zeta} { i \vert x-x' \vert^2}\Bigr)^N a(x, \zeta) ( b(x)- b(x') \phi(x')d \zeta, $$
 and hence gives the bound for any $N \in \mathbb{N}$,
  \begin{equation}\label{schur2}
   |K(x,x')| \leq \frac{ C_N \widetilde{h}_n ^{N-k}} { |x-x'|^{N-1}} \| \nabla_x b\|_{L^\infty}.
   \end{equation}
  It follows from \eqref{schur1} and \eqref{schur2} that
  $$\int_{\xR^k} \vert K(x,x') \vert dx + \int_{\xR^k} \vert K(x,x') \vert dx' \leq C \widetilde{h}_n  \| \nabla_x b\|_{L^\infty}.$$ 
  Then Lemma~\ref{lem.commut} follows from Schur's lemma. 
 \end{proof}
 Denoting by $[A, B]$ the commutator of the operators $A$ and $B$ let us set
 $$  C =   \frac {i} {\widetilde{h}_n} \bigl( \big [ a(x, \widetilde{h}_n D_x),  ( \widetilde{h}_n ^2 \Delta_{g_{\sigma_n}} -1) \big] u_n , u_n \bigr)_{L^2 }. $$
 Then we can write using  \eqref{eq-1} and  \eqref{delta-g},
 \begin{equation}\label{hyper}
 \begin{aligned}
  C &= \frac i { \widetilde{h}_n} \bigl(\Big[ a(x, \widetilde{h}_n D_x), \widetilde{h}_n ^2f_n \big], u_n \bigr) _{L^2} = (1) + (2) + (3)\\
      (1) &= \frac {i} {\widetilde{h}_n} \bigl(\big [a(x, \widetilde{h}_n D_x),  ( \widetilde{h}_n ^2 \Delta_{0} - 1)\big ] u_n , u_n \bigr)_{L^2 },\\
  (2) &= \frac {i} {\widetilde{h}_n}  \sum_{j,l = 1}^k \big( \big[a(x, \widetilde{h}_n D_x), \widetilde{h}_n\partial_j \big(  (g^{j l }_{{\sigma_n}} - \delta_{jl})  \widetilde{h}_n\partial_l \big) \big]u_n, u_n \big)_{L^2},\\ 
   (3) &=\frac {i} {\widetilde{h}_n}  \sum_{j,l =1}^k  \widetilde{h}_n\big( \big[a(x, \widetilde{h}_n D_x), \frac{1}{2D_n}  g_{\sigma_n}^{jl} (\partial_j   D_n) \widetilde{h}_n \partial_l\big] u_n, u_n \big)_{L^2}.\\
    \end{aligned} 
 \end{equation}
By symbolic calculus, the   term $(1)$  is modulo an $\mathcal{O}( \widetilde{h}_n)$ term equal to 
$$ \bigl( \text{Op}(\{a(x, \zeta) ,  |\zeta|^2\}) u_n, u_n \bigr)_{L^2  },$$   
     where $\{,\}$ denotes the Poisson bracket, and hence  tends to 
$$ \langle \nu, \{a(x, \zeta) ,  |\zeta|^2\}\rangle =   \langle 2\zeta \cdot \nabla_x \nu,  a\rangle.$$
Let us look to   $(2)$.   Each term in the   sum   can be bounded by 
\begin{multline*}
  \frac{1}{\widetilde{h }_n} \vert\big( \big[a(x, \widetilde{h}_n D_x), \widetilde{h}_n\partial_j\big] (g^{j l }_{{\sigma_n}} - \delta_{jl})  \widetilde{h}_n\partial_l u_n, u_n \big)_{L^2}\vert \\
  +\frac{1}{\widetilde{h }_n}\big(\big[a(x, \widetilde{h}_n D_x), g^{j l }_{{\sigma_n}} - \delta_{jl})\big] \widetilde{h}_n\partial_l u_n, \widetilde{h}_n\partial_j u_n\big) \vert\\
  +  \frac{1}{\widetilde{h }_n} \vert \big(( g^{j l }_{{\sigma_n}} - \delta_{jl}) \big[a(x, \widetilde{h}_n D_x), \widetilde{h}_n\partial_l\big]u_n,  \widetilde{h}_n\partial_j u_n\big) \vert.
  \end{multline*}
By the semiclassical symbolic calculus and Lemma \ref{lem.commut} the norms in $\mathcal{L}(L^2)$ of the operators $\big[a(x, \widetilde{h}_n D_x), \widetilde{h}_n\partial_j\big]$ and  $\big[a(x, \widetilde{h}_n D_x), g^{j l }_{{\sigma_n}} - \delta_{jl})\big]$ are bounded respectively by  $C \widetilde{h}_n $ and $C \widetilde{h}_n \Vert \nabla_x g^{j l }_{{\sigma_n}} \Vert_{L^\infty}$ where $C$ is independent of $n$. Therefore using \eqref{cv-metr} and \eqref{estapri2} we deduce that $(2)$ tends to zero when $n$ goes to $+ \infty.$

Unfolding the commutator and using \eqref{eq-2}, \eqref{estapri2} we see that the third  term in~\eqref{hyper} is a finite sum of terms which are bounded by
 $C \Vert \partial_j   D_n\Vert_{L^\infty}. $  We deduce from \eqref{cv-metr} that $(3)$ tends to zero when $n$ goes to $+ \infty.$

Now, opening the commutator we see that the r.h.s. in the first equation in ~\eqref{hyper} is  equal to 
  $$ \frac i { \widetilde{h}_n} \bigl( a(x, \widetilde{h}_n D_x) \widetilde{h}_n ^2f_n, u_n \bigr) _{L^2} -\frac i { \widetilde{h}_n} \bigl( \widetilde{h}_n ^2 f_n, a^*(x, \widetilde{h}_n D_x)u_n \bigr) _{L^2}.$$ 
  These terms are bounded by $C \widetilde{h}_n \Vert f_n \Vert_{L^2(B_2)}\Vert u_n \Vert_{L^2(B_2)}$ and tend  to zero when goes to $+ \infty$ since, according to~\eqref{eq-2},  $ \Vert u_n \Vert_{L^2(B_2)}$ is uniformly bounded and  $\Vert f_n \Vert_{L^2(B_2)} = o(\tau^\mez_n)=o(\widetilde{h}_n^{-1})$ . This ends the proof of Lemma~\ref{lem.propa}, and hence of Proposition~\ref{propag}
\end{proof}

\def\cprime{$'$} \def\cprime{$'$}

\end{document}